\newtheorem{theorem}{Theorem}
\newtheorem{lemma}{Lemma}
\begin{document}
\baselineskip=17pt

\title{\bf On the distribution of {\boldmath$\alpha p^2$} modulo one over primes of the form {\boldmath$[n^c]$}}

\author{\bf S. I. Dimitrov \and \bf M. D. Lazarova}

\date{2025}

\maketitle

\begin{abstract}
Let $[\, \cdot\,]$ be the floor function and $\|x\|$ denote the distance from $x$ to the nearest integer.
In this paper we show that whenever $\alpha$ is irrational and $\beta$ is real then for any fixed $\frac{13}{14}<\gamma<1$, there exist infinitely many prime numbers $p$ satisfying the inequality
\begin{equation*}
\|\alpha p^2+\beta\|< p^{\frac{13-14\gamma}{29}+\varepsilon}
\end{equation*}
and such that $p=[n^{1/\gamma}]$.\\
\quad\\
\textbf{Keywords}:  Distribution modulo one, Piatetski-Shapiro primes.\\
\quad\\
{\bf  2020 Math.\ Subject Classification}: 11J25 $\cdot$ 11J71 $\cdot$ 11L07 $\cdot$ 11L20
\end{abstract}

\section{Introduction and statement of the result}
\indent

The existence of infinitely many prime numbers of a special form is one of the biggest challenge in prime number theory.
There are not many thin sets of primes about which we have the asymptotic formula for their distribution.
In 1953, Piatetski-Shapiro \cite{Shapiro} showed that for any fixed $\frac{11}{12}<\gamma<1$, there exist infinitely many prime numbers of the form $p = [n^{1/\gamma}]$.
Such primes are called Piatetski-Shapiro primes of type $\gamma$.
Subsequently the interval for $\gamma$ was improved by many authors and the best result to date has been supplied by Rivat and Wu \cite{Rivat-Wu}.
More precisely they showed that for any fixed $\frac{205}{243}<\gamma<1$ we have
\begin{equation}\label{Louerbound}
\sum\limits_{p\leq X\atop{p=[n^{1/\gamma}]}}1\gg\frac{X^\gamma}{\log X}\,.
\end{equation}
On the other hand in 1947 Vinogradov \cite{Vinogradov} proved that if $\theta=\frac{1}{5}-\varepsilon$, then there are infinitely many primes $p$ such that
\begin{equation}\label{Vinogradov-ineq}
\|\alpha p+\beta\|<p^{-\theta }\,.
\end{equation}
Afterwards, inequality \eqref{Vinogradov-ineq} was sharpened several times and the best result up to now belongs to Matom\"{a}ki \cite{Mato} with $\theta=\frac{1}{3}-\varepsilon$ and $\beta=0$.

Recently, Dimitrov \cite{Dimitrov} considered a hybrid problem, restricting the set of primes $p$ in \eqref{Vinogradov-ineq} to Piatetski-Shapiro primes.
To be specific, he proved that, for any fixed $\frac{11}{12}<\gamma<1$, there exist infinitely many Piatetski-Shapiro primes $p$ of type $\gamma$ such that
\begin{equation*}
\|\alpha p+\beta\|< p^{\frac{11-12\gamma}{26}+\varepsilon}\,.
\end{equation*}
In turn X. Li, J. Li and Zhang \cite{Li} generalized the result of Dimitrov \cite{Dimitrov} by solving \eqref{Vinogradov-ineq} with primes $p=[n_1^{1/\gamma_1}]=[n_2^{1/\gamma_2}]$, 
where $\frac{23}{12}<\gamma_1+\gamma_2<2$ and with $\theta=\frac{12(\gamma_1+\gamma_2)-23}{38}-\varepsilon$.
Very recently Baier and Rahaman \cite{Baier} managed to improve Dimitrov's result by solving \eqref{Vinogradov-ineq} with primes $p=[n^{1/\gamma}]$, where $\frac{8}{9}<\gamma<1$
and with $\theta=\frac{9\gamma-8}{10}-\varepsilon$.

The researchers solved inequality \eqref{Vinogradov-ineq} with higher powers of $p$. Ghosh \cite{Ghosh}  is credited with the inequality
\begin{equation}\label{Ghosh-ineq}
\|\alpha p^2+\beta\|<p^{-\theta}\,,
\end{equation}
which is valid for infinitely many primes $p$ and $\theta=\frac{1}{8}-\varepsilon$.
Subsequently, the result of Ghosh  was sharpened by Baker and Harman \cite{Baker} with $\theta=\frac{3}{20}-\varepsilon$ and by Harman \cite{Harman1996} with $\theta=\frac{2}{13}-\varepsilon$.
As a continuation of these studies, we solve inequality \eqref{Ghosh-ineq} with Piatetski-Shapiro primes. 
\begin{theorem}\label{Theorem}
Let $\gamma$ be fixed with $\frac{13}{14}<\gamma<1$, $\alpha$ is irrational and $\beta$ is real.
Then there exist infinitely many Piatetski-Shapiro primes $p$ of type $\gamma$ such that
\begin{equation*}
\|\alpha p^2+\beta\|<p^{\frac{13-14\gamma}{29}+\varepsilon}\,.
\end{equation*}
\end{theorem}

We remark that Theorem \ref{Theorem} is unlikely to be best possible. It is plausible that more refined exponential sum estimates and/or sieve methods could further extend the admissible range of $\gamma$. 
However, we have chosen not to pursue such refinements here, as our primary aim is to demonstrate that inequality \eqref{Ghosh-ineq} admits infinitely many solutions in Piatetski-Shapiro primes.

\section{Notations}
\indent

Let $C$ be a sufficiently large positive constant. The letter $p$ will always denote a prime number.
By $\varepsilon$ we denote an  arbitrarily small positive number, not the same in all appearances.
The notation $m\sim M$ means that $m$ runs through the interval $(M, 2M]$.
As usual $\Lambda(n)$ is von Mangoldt's function and $\tau(n)$ denotes the number of positive divisors of $n$.
By $[x]$, $\{x\}$ and $\|x\|$ we denote the integer part of $x$, the fractional part of $x$ and the distance from $x$ to the nearest integer.
Moreover $e(x)=e^{2\pi i x}$ and $\psi(t)=\{t\}-1/2$.
Let $\gamma$ be a real constant such that $\frac{13}{14}<\gamma<1$.
Since $\alpha$ is irrational, there are infinitely many different convergents $a/q$ to its continued fraction, with
\begin{equation}\label{alphaaq}
\bigg|\alpha- \frac{a}{q}\bigg|<\frac{1}{q^2}\,,\quad (a, q) = 1\,,\quad a\neq0
\end{equation}
and $q$ is arbitrary large. Denote
\begin{align}
\label{Nq}
&N=q^\frac{29}{55-28\gamma}\,;\\
\label{Delta}
&\Delta=CN^{\frac{13-14\gamma}{29}+\varepsilon}\,;\\
\label{HNq}
&H=\big[q^\frac{1}{2}\big]\,;\\
\label{Mgamma}
&M=N^{\frac{16-15\gamma}{29}}\,;\\
\label{vgamma}
&\vartheta=N^{\frac{2\gamma+23}{58}}\,;\\
\label{Sigma}
&\Sigma=\sum\limits_{p\leq N}\Big(\psi(-(p+1)^\gamma)-\psi(-p^\gamma)\Big)e(\alpha h p^2)\log p\,.
\end{align}

\section{Preliminary lemmas}
\indent

\begin{lemma}\label{Summin2}
Suppose that $H, N\geq1$,\, $\big|\alpha-\frac{a}{q}\big|<\frac{1}{q^2}$\,, $(a, q)=1$.
Then
\begin{equation*}
\sum_{n\le N} \, \min \left(1,\, \frac{1}{H \|\alpha n^2+\beta\pm\Delta\| } \right) \ll (NHq)^\varepsilon\Big(Nq^{-\frac{1}{2}}+N^\frac{1}{2}+NH^{-1}+H^{-\frac{1}{2}}q^\frac{1}{2}\Big)\,.
\end{equation*}
\end{lemma}
\begin{proof}
See (\cite{Ghosh}, pp. 265 -- 266).
\end{proof}

\begin{lemma}\label{Expsumest}
Suppose that $\alpha \in \mathbb{R}$,\, $a \in \mathbb{Z}$,\, $q\in \mathbb{N}$,\, $\big|\alpha-\frac{a}{q}\big|\leq\frac{1}{q^2}$\,, $(a, q)=1$.
Then
\begin{equation*}
\sum\limits_{p\le N}e(\alpha p^2)\log p\ll N^{1+\varepsilon}\bigg(\frac{1}{q}+\frac{1}{N^\frac{1}{2}}+\frac{q}{N^2}\bigg)^\frac{1}{4}\,.
\end{equation*}
\end{lemma}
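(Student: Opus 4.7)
The approach is the classical Vinogradov--Vaughan scheme for sums of $e(\alpha n^2)$ over primes: decompose $\Lambda$ into Type I and Type II bilinear pieces, then hit each piece with Weyl's inequality for quadratic exponential sums. Concretely, I would start from Vaughan's identity with parameters $U,V$ to be chosen of size $\sim N^{1/2}$, obtaining a representation
\begin{equation*}
\sum_{p\le N}e(\alpha p^2)\log p \;=\; S_{I}+S_{II}+O(N^{1/2}\log N),
\end{equation*}
where the Type I sum has the shape $S_{I}=\sum_{m\le M_0}a_m\sum_{k\le N/m}e(\alpha m^2k^2)$ and the Type II sum has the shape $S_{II}=\sum_{m\sim M}\sum_{k\sim K}a_mb_k\,e(\alpha m^2k^2)$ with $MK\asymp N$, $U\ll M\ll N/V$, and with coefficients satisfying $|a_m|,|b_k|\ll\tau(m)\log N$.

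For $S_I$, the inner sum over $k$ is, for each fixed $m$, a quadratic Weyl sum with leading coefficient $\alpha m^2$. Using Dirichlet's theorem to produce a rational approximation $|\alpha m^2-a_m/q_m|\le 1/(q_m Q)$ and inserting it into the standard Weyl bound, one obtains
\begin{equation*}
\sum_{k\le N/m}e(\alpha m^2k^2)\ll (N/m)^{1+\varepsilon}\bigl(q_m^{-1}+m/N+q_mm^2/N^{2}\bigr)^{1/2}.
\end{equation*}
Summing in $m$ and using the fact that the large sieve / divisor argument controls the multiplicities of small $q_m$ relative to $q$ through \eqref{alphaaq}, this contributes an acceptable bound.

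For $S_{II}$, which is the delicate half, I apply Cauchy--Schwarz in $m$:
\begin{equation*}
|S_{II}|^2 \ll M(\log N)^{O(1)}\sum_{m\sim M}\Bigl|\sum_{k\sim K}b_k\,e(\alpha m^2k^2)\Bigr|^{2} = M(\log N)^{O(1)}\!\!\sum_{k_1,k_2\sim K}b_{k_1}\overline{b_{k_2}}\!\sum_{m\sim M}e\bigl(\alpha m^{2}(k_1^{2}-k_2^{2})\bigr).
\end{equation*}
The diagonal $k_1=k_2$ contributes $\ll M^{2}K\,(\log N)^{O(1)}$. For the off-diagonal, the inner sum is again a quadratic Weyl sum, now in $m$, with coefficient $\alpha(k_1^2-k_2^2)$. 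Applying Weyl's inequality and summing over $k_1,k_2$ by splitting into dyadic ranges of the denominator of the rational approximation to $\alpha(k_1^2-k_2^2)$ (where the key input is again \eqref{alphaaq}) gives the required bound. Finally, balancing $U,V\asymp N^{1/2}$ so that the Type I and Type II estimates match produces the exponent $1/4$: Cauchy--Schwarz costs a square root, and the subsequent Weyl bound carries a $1/2$ inside the bracket, yielding $(\,\cdot\,)^{1/4}$ after taking the square root.

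The main obstacle is the bookkeeping in the Type II step: one must show that for most pairs $(k_1,k_2)$ the denominator $\tilde q$ of the best approximation to $\alpha(k_1^{2}-k_2^{2})$ is not too small, converting the single approximation in \eqref{alphaaq} into a uniform bound that survives the sum over $O(K^{2})$ pairs. A standard divisor-bound argument combined with the fact that $k_1^{2}-k_2^{2}=(k_1-k_2)(k_1+k_2)$ factors nicely, together with the dyadic splitting of $\tilde q$, handles this; the final step is simply to optimize the Vaughan parameters and collect the error terms into the stated form $N^{1+\varepsilon}(q^{-1}+N^{-1/2}+qN^{-2})^{1/4}$.
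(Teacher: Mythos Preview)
The paper does not prove this lemma itself; it simply cites Theorem~2 of Ghosh~\cite{Ghosh}. Your sketch is, in outline, the method of that paper: a Vinogradov--Vaughan decomposition into Type~I and Type~II pieces, Weyl's inequality for the quadratic inner sums, and Cauchy--Schwarz followed by Weyl differencing for the bilinear piece, with the exponent $\tfrac14$ arising from the two successive square-root losses. So there is no divergence to report --- you have essentially reconstructed the cited argument.

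One remark on the Type~II bookkeeping you flag as the obstacle: rather than producing a separate rational approximation $\tilde q$ for each coefficient $\alpha(k_1^{2}-k_2^{2})$ and splitting dyadically over $\tilde q$, the cleaner route is to apply one more Weyl--van der Corput step to the inner sum over $m$, reducing to \emph{linear} exponential sums $\sum_{m}e\bigl(2\alpha h m(k_1^{2}-k_2^{2})\bigr)$, and then invoke the standard estimate
\[
\sum_{|r|\le R}\min\bigl(M,\|\alpha r\|^{-1}\bigr)\ll\Bigl(\tfrac{R}{q}+1\Bigr)(M+q\log q)
\]
directly from the single approximation \eqref{alphaaq}. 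This sidesteps the pairwise denominator analysis you describe and is what produces the bracket $q^{-1}+N^{-1/2}+qN^{-2}$ in its final form. Your dyadic-$\tilde q$ route can be made to work too, but it is heavier than necessary here.
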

\begin{proof}
See (\cite{Ghosh}, Theorem 2).
\end{proof}

\begin{lemma}\label{Expansion}
For any $M\geq2$, we have
\begin{equation*}
\psi(t)=-\sum\limits_{1\leq|m|\leq M}\frac{e(mt)}{2\pi i m}+\mathcal{O}\Bigg(\min\left(1, \frac{1}{M\|t\|}\right)\Bigg)\,,
\end{equation*}
\begin{proof}
See (\cite{Tolev1}, Lemma 5.2.2).
\end{proof}
\end{lemma}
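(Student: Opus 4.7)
The plan is to derive the truncated Fourier expansion from the classical Fourier series of the sawtooth function $\psi(t)=\{t\}-1/2$ by isolating the partial sum of length $M$ and bounding the tail. First I would establish that, for $t\notin\mathbb{Z}$,
\[
\psi(t)=-\sum_{m=1}^{\infty}\frac{\sin(2\pi mt)}{\pi m}=-\lim_{N\to\infty}\sum_{1\le|m|\le N}\frac{e(mt)}{2\pi im},
\]
the two forms being equivalent because pairing the terms with $m$ and $-m$ in the exponential sum gives $-(e(mt)-e(-mt))/(2\pi im)=-\sin(2\pi mt)/(\pi m)$. This identity follows from a direct Fourier-coefficient computation ($\widehat\psi(m)=-1/(2\pi im)$ for $m\neq 0$ and $\widehat\psi(0)=0$) together with the Dirichlet convergence test for the resulting sine series. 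Setting
\[
E(t):=\psi(t)+\sum_{1\le|m|\le M}\frac{e(mt)}{2\pi im}=-\sum_{m>M}\frac{\sin(2\pi mt)}{\pi m},
\]
the task reduces to proving $E(t)\ll\min(1,1/(M\|t\|))$. At integer $t$ the truncated sum vanishes by the $m\leftrightarrow-m$ symmetry, so $E(t)=-1/2$ is trivially $O(1)$ as required.

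For $0<\|t\|\le 1/2$, I would apply summation by parts to the tail using the elementary geometric-sum bound $|\sum_{m=M+1}^{N}e(mt)|\le|1-e(t)|^{-1}\ll 1/\|t\|$; partial summation against the weights $1/m$ then yields $|E(t)|\ll 1/(M\|t\|)$. This handles the regime $\|t\|\ge 1/M$, where $\min(1,1/(M\|t\|))=1/(M\|t\|)$. In the remaining range $0<\|t\|<1/M$ I must establish the uniform bound $|E(t)|\ll 1$. Since $|\psi(t)|\le 1/2$, it suffices to show that the partial Fourier sum $\sum_{m=1}^{M}\sin(2\pi mt)/(\pi m)$ is bounded by an absolute constant uniformly in $M$ and $t$. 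I would accomplish this by splitting the sum at $m_0=\lceil 1/\|t\|\rceil$: for $m\le m_0$ the inequality $|\sin(2\pi mt)|\le 2\pi m\|t\|$ gives a contribution of order $m_0\|t\|=O(1)$, and for $m_0<m\le M$ another application of partial summation (again with $|\sum e(mt)|\ll 1/\|t\|$) gives a contribution of order $1/(m_0\|t\|)=O(1)$.

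The main technical obstacle is the clean handling of the small-$\|t\|$ regime, where the naive partial-summation bound $O(1/(M\|t\|))$ diverges; the remedy is the split at the natural scale $m\approx 1/\|t\|$, which is essentially the content of the classical boundedness of the partial sine series (Gibbs phenomenon). A secondary subtlety is that one must keep the sums symmetric in $\pm m$ throughout, so that the sine-series computation stays consistent with the exponential-sum form appearing in the statement of the lemma; once this is done, combining the two regimes produces exactly the stated bound $\mathcal{O}(\min(1,1/(M\|t\|)))$.
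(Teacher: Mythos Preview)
Your argument is correct and is the standard derivation of this truncated expansion; the paper itself does not prove the lemma but simply cites Tolev's textbook (Lemma~5.2.2), so there is no alternative approach to compare against. What you have outlined---computing the Fourier coefficients of $\psi$, bounding the tail $\sum_{m>M}\sin(2\pi mt)/(\pi m)$ by partial summation against the geometric-sum estimate $\ll 1/\|t\|$, and handling the range $\|t\|<1/M$ by splitting at $m_0\asymp 1/\|t\|$---is precisely the classical proof one finds in the cited reference and elsewhere.
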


\begin{lemma}\label{Sargos}
Suppose that $f'''(t)$ exists, is continuous on $[a,b]$ and satisfies
\begin{equation*}
f'''(t)\asymp\lambda\quad(\lambda>0)\quad\mbox{for}\quad t\in[a,b]\,.
\end{equation*}
Then
\begin{equation*}
\bigg|\sum_{a<n\le b}e(f(n))\bigg|\ll(b-a)\lambda^\frac{1}{6}+\lambda^{-\frac{1}{3}}\,.
\end{equation*}
\end{lemma}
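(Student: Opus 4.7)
The plan is to establish this as an instance of the classical third derivative test from van der Corput's method. The strategy is to first reduce to a second derivative estimate via Weyl differencing (the so-called A process), and then optimize the differencing length.

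First, write $S=\sum_{a<n\le b}e(f(n))$ and fix an integer $H$ with $1\le H\le b-a$. By the Weyl--van der Corput inequality,
\begin{equation*}
|S|^2\ll\frac{(b-a)^2}{H}+\frac{b-a}{H}\sum_{h=1}^{H}\Bigl|\sum_{n\in I_h}e\bigl(f(n+h)-f(n)\bigr)\Bigr|,
\end{equation*}
where each $I_h\subset(a,b-h]$. For each fixed $h$, set $\phi_h(t)=f(t+h)-f(t)$; by the mean value theorem and the hypothesis $f'''(t)\asymp\lambda$ we have $\phi_h''(t)=hf'''(\xi)\asymp h\lambda$. The classical second derivative test (Kuzmin--Landau type estimate applied after splitting $\phi_h'$ into monotone pieces) then yields
\begin{equation*}
\Bigl|\sum_{n\in I_h}e(\phi_h(n))\Bigr|\ll (b-a)(h\lambda)^{1/2}+(h\lambda)^{-1/2}.
\end{equation*}

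Next, sum over $1\le h\le H$ to get $\sum_h\bigl((b-a)(h\lambda)^{1/2}+(h\lambda)^{-1/2}\bigr)\ll (b-a)H^{3/2}\lambda^{1/2}+H^{1/2}\lambda^{-1/2}$, whence
\begin{equation*}
|S|^2\ll\frac{(b-a)^2}{H}+(b-a)^2H^{1/2}\lambda^{1/2}+(b-a)H^{-1/2}\lambda^{-1/2}.
\end{equation*}
Balancing the first two terms by choosing $H\asymp\lambda^{-1/3}$ (and requiring $H\le b-a$, else the trivial bound $|S|\le b-a\le\lambda^{-1/3}$ is already acceptable) produces the principal contribution $(b-a)\lambda^{1/6}$.

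The main obstacle is the error term: the routine A-process + second derivative test produces $(b-a)^{1/2}\lambda^{-1/6}$ in place of the sharper $\lambda^{-1/3}$ claimed in the statement. The refinement is precisely Sargos's contribution and requires more delicate analysis. The idea is to replace the crude application of the second derivative test on each differenced sum by an application of Poisson summation (the B process); this transforms $\sum_n e(\phi_h(n))$ into a dual sum supported near the stationary points of $\phi_h(t)-mt$, i.e.\ points where $\phi_h'(t)=m$. The condition $f'''\asymp\lambda$ controls the spacing of these stationary points and hence the number of significant terms, which upon careful combination over $h$ trims the error down to $\lambda^{-1/3}$. Carrying out this refinement cleanly is the technical heart of the argument, and for the present paper it is most efficient to invoke the result as stated from Sargos's original article or the Graham--Kolesnik monograph on van der Corput's method.
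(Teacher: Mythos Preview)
The paper does not prove this lemma at all; it simply cites Sargos, Corollary~4.2, as a black box. Your proposal goes further by sketching the van der Corput $A$-process reduction to the second derivative test and correctly observing that this classical route only yields the weaker error term $(b-a)^{1/2}\lambda^{-1/6}$, with the sharp $\lambda^{-1/3}$ requiring Sargos's refinement via the $B$-process. Since you ultimately defer to the same literature for that refinement, your treatment is entirely consistent with the paper's---just more expository---and there is no gap to flag.
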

\begin{proof}
See (\cite{Sargos}, Corollary 4.2).
\end{proof}

\begin{lemma}\label{Iwaniec-Kowalski}
For any complex numbers $a(n)$ we have
\begin{equation*}
\bigg|\sum_{a<n\le b}a(n)\bigg|^2\leq\bigg(1+\frac{b-a}{Q}\bigg)\sum_{|q|\leq Q}\bigg(1-\frac{|q|}{Q}\bigg)\sum_{a<n,\, n+q\leq b}a(n+q)\overline{a(n)}\,,
\end{equation*}
where $Q$ is any positive integer.
\end{lemma}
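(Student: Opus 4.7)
The plan is to show that, for each $q$ satisfying \eqref{alphaaq}, the number of primes $p \leq N$ simultaneously fulfilling $p = [n^{1/\gamma}]$ and $\|\alpha p^2+\beta\|<\Delta$ is bounded below by $\gg \Delta N^\gamma(\log N)^{-1}$. Since $q$ may be taken arbitrarily large along the convergents of $\alpha$, and since $p \asymp N$ for such primes, the inequality of the theorem then follows from \eqref{Delta}.

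The counting proceeds by combining two indicator expansions. First, the Piatetski-Shapiro condition: for $0<\gamma<1$ one has
\begin{equation*}
\mathbf{1}_{p=[n^{1/\gamma}]} \;=\; [-p^\gamma]-[-(p+1)^\gamma] \;=\; \big((p+1)^\gamma-p^\gamma\big)+\big(\psi(-(p+1)^\gamma)-\psi(-p^\gamma)\big),
\end{equation*}
separating a smooth density of order $\gamma p^{\gamma-1}$ from an oscillatory remainder. Second, the Diophantine condition is approximated in the Fourier-analytic way $\mathbf{1}_{\|\alpha p^2+\beta\|<\Delta} = 2\Delta+\sum_{1\leq|h|\leq H}c_h\,e(h(\alpha p^2+\beta))+R(p)$ with $|c_h|\ll\min(\Delta,1/|h|)$ and remainder controlled by $\sum_p\min\!\big(1,(H\|\alpha p^2+\beta\|)^{-1}\big)$. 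Multiplying these expansions, weighting by $\log p$, and summing over $p\leq N$ produces a principal term of size $\Delta N^\gamma(\log N)^{-1}$ from the piece $2\Delta\cdot\big((p+1)^\gamma-p^\gamma\big)$ (via the prime number theorem), together with several error sums, the decisive one being the quantity $\Sigma$ from \eqref{Sigma}, weighted by $c_h$ and summed over $1\leq|h|\leq H$. The $R(p)$ contribution is disposed of by Lemma \ref{Ghosh} with the choice \eqref{HNq}.

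The core of the argument is the bound $\Sigma\ll N^{\gamma-\varepsilon}$ uniformly for $1\leq h\leq H$. To estimate $\Sigma$, apply Lemma \ref{Expansion} to each $\psi(-p^\gamma)$ and $\psi(-(p+1)^\gamma)$ with the truncation $M$ of \eqref{Mgamma}; this produces a principal bilinear expression
\begin{equation*}
-\sum_{1\leq|m|\leq M}\frac{1}{2\pi i m}\sum_{p\leq N}\Big(e(-m(p+1)^\gamma)-e(-mp^\gamma)\Big)\,e(\alpha h p^2)\log p,
\end{equation*}
plus a discrepancy-type tail again controlled by Lemma \ref{Ghosh}. Split the $m$-sum at the threshold $\vartheta$ from \eqref{vgamma}. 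For $|m|\leq\vartheta$ the quadratic phase $\alpha h p^2$ dominates the sublinear Piatetski-Shapiro phase $m p^\gamma$; absorbing the latter by partial summation reduces each inner sum to a form estimable by Lemma \ref{Expsumest}. For $|m|>\vartheta$, writing $e(-m(p+1)^\gamma)-e(-mp^\gamma) = -2\pi i m\gamma p^{\gamma-1}e(-mp^\gamma)+O(m^2p^{2\gamma-2})$ extracts a genuinely oscillatory phase; apply Lemma \ref{Iwaniec-Kowalski} over $p$ to square out and reduce to exponential sums in consecutive integer shifts, then invoke Lemma \ref{Sargos} on the resulting phases $-m(t+k)^\gamma+\alpha h (2kt+k^2)$, whose third derivative is $\asymp m N^{\gamma-3}$.

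The main obstacle is the simultaneous optimisation in this final step. Each sub-regime (small $m$ via Lemma \ref{Expsumest}, large $m$ via Lemmas \ref{Iwaniec-Kowalski}–\ref{Sargos}, the $R(p)$ remainder via Lemma \ref{Ghosh}, and the discrepancy tail from $\psi$-expansion) produces an error of shape $N^{a+b\gamma}q^{c+d\gamma}$ with explicit but incompatible exponents. The assignments \eqref{Nq}–\eqref{vgamma} — particularly the non-obvious choices $M=N^{(16-15\gamma)/29}$ and $\vartheta=N^{(2\gamma+23)/58}$ — are dictated by forcing all these contributions to collapse to a common $\ll \Delta N^\gamma(\log N)^{-2}$; this is precisely where the exponent $(13-14\gamma)/29$ and the range $\gamma>13/14$ are born. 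Once the balancing is verified, summing over $1\leq|h|\leq H$ (the factor $\sum_h 1/h\ll \log H$ is absorbed into the $\varepsilon$) completes the proof.
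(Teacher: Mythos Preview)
Your proposal does not address the stated lemma at all. The statement in question is the Weyl--van der Corput (Iwaniec--Kowalski) differencing inequality
\[
\bigg|\sum_{a<n\le b}a(n)\bigg|^2\leq\bigg(1+\frac{b-a}{Q}\bigg)\sum_{|q|\leq Q}\bigg(1-\frac{|q|}{Q}\bigg)\sum_{a<n,\, n+q\leq b}\overline{a(n+q)}a(n),
\]
which is a purely combinatorial fact about arbitrary complex sequences $a(n)$. In the paper this lemma is simply cited to Iwaniec--Kowalski, \emph{Analytic Number Theory}, Lemma~8.17; a self-contained proof would average the sequence over shifts against a Fej\'er-type weight and apply Cauchy--Schwarz. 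Your write-up, by contrast, is an outline of the proof of the \emph{main theorem} (Theorem~\ref{Maintheorem}) --- you even invoke Lemma~\ref{Iwaniec-Kowalski} as a tool within it --- and contains no argument whatsoever for the inequality above.

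If your intention was in fact to sketch the main theorem, note that your outline also diverges from the paper's actual argument in a substantive way: you propose to split the $m$-sum at the threshold $\vartheta$ and to handle small $m$ via Lemma~\ref{Expsumest} and large $m$ via Lemmas~\ref{Iwaniec-Kowalski}--\ref{Sargos}. The paper does not do this. Instead it decomposes the inner sum $\Phi(N_1,N_2)=\sum_n \Lambda(n)e(\alpha h n^2 - m n^\gamma)$ via Vaughan's identity with parameter $\vartheta$, treating the resulting Type~I sums $\Theta_1,\Theta_2$ directly by Lemma~\ref{Sargos} and the Type~II sums $\Theta_3,\Theta_4$ by Cauchy plus Lemma~\ref{Iwaniec-Kowalski} followed by Lemma~\ref{Sargos}. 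Your ``small $m$ via Lemma~\ref{Expsumest}'' route would not work as stated: absorbing the phase $m p^\gamma$ by partial summation costs a factor $\asymp m N^{\gamma-1}$ in the derivative, and summing $m^{-1}\cdot m N^{\gamma-1}$ over $1\le m\le\vartheta$ against the bound from Lemma~\ref{Expsumest} does not give the saving you need.
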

\begin{proof}
See (\cite{Iwaniec-Kowalski}, Lemma 8.17).
\end{proof}

\section{Proof of the theorem}

\subsection{Beginning of the proof}

\indent

Our method goes back to Vaughan \cite{Vaughan1}. We take a periodic with period 1 function such that
\begin{equation*}
F_\Delta(\theta)=\begin{cases}
0\quad \mbox{if} \quad -\frac{1}{2}\leq \theta< -\Delta\,,\\
1\quad\mbox{if}\quad-\Delta\leq \theta< \Delta\,,\\
0\quad\mbox{if}\quad\; \Delta\leq \theta<\frac{1}{2}\,,
\end{cases}
\end{equation*}
where $\Delta$ is defined by \eqref{Delta}. Any non-trivial estimate from below of the sum
\begin{equation*}
\sum\limits_{p\leq N\atop{p=[n^{1/\gamma}]}}F_\Delta(\alpha p^2+\beta)\log p
\end{equation*}
implies Theorem \ref{Theorem}.
For this goal we define
\begin{equation}\label{Gamma}
\Gamma=\sum\limits_{p\leq N\atop{p=[n^{1/\gamma}]}}\big(F_\Delta(\alpha p^2+\beta)-2\Delta\big)\log p\,.
\end{equation}

\subsection{Estimation of $\mathbf{\Gamma}$}
\indent

\begin{lemma}\label{Sigmaest} Let $\frac{13}{14}<\gamma<1$. For the sum denoted by \eqref{Sigma} the upper bound
\begin{equation*}
\Sigma\ll N^{\frac{15\gamma+13}{29}+\varepsilon}
\end{equation*}
holds.
\end{lemma}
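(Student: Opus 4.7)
The plan is to apply Lemma \ref{Expansion} with the cutoff $M$ from \eqref{Mgamma} to each $\psi$-value appearing in \eqref{Sigma}. Writing
\begin{equation*}
\psi\bigl(-(p+1)^\gamma\bigr)-\psi\bigl(-p^\gamma\bigr)=\sum_{1\le|m|\le M}\frac{e(-mp^\gamma)-e(-m(p+1)^\gamma)}{2\pi i m}+\mathcal{E}(p),
\end{equation*}
with $\mathcal{E}(p)\ll\min\!\bigl(1,\tfrac{1}{M\|p^\gamma\|}\bigr)+\min\!\bigl(1,\tfrac{1}{M\|(p+1)^\gamma\|}\bigr)$, I would split $\Sigma=\Sigma_1+\Sigma_2$ into a Fourier main term and a min-term error. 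The error $\Sigma_2$ reduces to a discrepancy-type sum $\sum_{n\le N}\min(1,1/(M\|n^\gamma\|))$, which via Fourier expansion of $\min$ and Lemma \ref{Sargos} applied to the smooth phase $mn^\gamma$ (whose third derivative has size $\asymp m N^{\gamma-3}$) is bounded by the target $N^{\frac{15\gamma+13}{29}+\varepsilon}$ under the calibrated choice \eqref{Mgamma}.

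For the main term $\Sigma_1$, interchange of summation reduces the problem to bounding, uniformly for $1\le|m|\le M$, the exponential sum over primes
\begin{equation*}
V_m=\sum_{p\le N}\log p\cdot e(\alpha h p^2)\bigl(e(-mp^\gamma)-e(-m(p+1)^\gamma)\bigr),
\end{equation*}
and then summing $|V_m|/m$ over $m$. A Vaughan or Heath--Brown decomposition of $\log p$ with splitting level $\vartheta$ from \eqref{vgamma} writes $V_m$ as a combination of Type I sums of the form $\sum_{k\le\vartheta}a_k\sum_{\ell\sim L}e(\alpha h k^2\ell^2-m k^\gamma\ell^\gamma)$ and Type II sums with both $k,\ell$ ranging in $[\vartheta,\,N/\vartheta]$. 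For the Type I sums the inner phase in $\ell$ has third derivative $\asymp m k^\gamma\ell^{\gamma-3}$ (the quadratic contribution vanishing after three differentiations), so Lemma \ref{Sargos} delivers the required saving.

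For the Type II sums I would invoke Lemma \ref{Iwaniec-Kowalski} with a shift $q$, producing inner sums whose hybrid phase, after differencing, has the form $\alpha h(2q\ell+q^2)k^2 - m\bigl((\ell+q)^\gamma-\ell^\gamma\bigr)k^\gamma$ in the $k$-variable; the quadratic coefficient inherits a useful rational approximation from \eqref{alphaaq}, while the differenced fractional part provides a nonzero third derivative $\asymp mq\ell^{\gamma-1}k^{\gamma-3}$ to which Lemma \ref{Sargos} again applies. The main obstacle is this Type II estimate: cancellation must be extracted simultaneously from the diophantine approximation of $\alpha$ (via \eqref{alphaaq} and the size of $N$ in \eqref{Nq}) and from the smooth oscillation $e(-m\ell^\gamma)$. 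The specific exponents \eqref{Mgamma} and \eqref{vgamma} are pinned down exactly by balancing the Type I and Type II contributions so that both match $\tfrac{15\gamma+13}{29}$, yielding $\sum_{|m|\le M}|V_m|/m\ll N^{\frac{15\gamma+13}{29}+\varepsilon}$ and hence the lemma.
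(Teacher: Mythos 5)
Your decomposition tracks the paper's almost step by step: expand each $\psi$ via Lemma \ref{Expansion} with cutoff $M$, split off an error sum of $\min(1,1/(M\|n^\gamma\|))$-type, Abel-sum to remove the $e(-mp^\gamma)-e(-m(p+1)^\gamma)$ difference, apply Vaughan's identity with level $\vartheta$, bound the Type I pieces by Lemma \ref{Sargos}, and treat the Type II pieces by Cauchy, Lemma \ref{Iwaniec-Kowalski} with a shift $q$, and Lemma \ref{Sargos} on the differenced phase. Your third-derivative sizes also agree with the paper's: $\asymp m k^\gamma\ell^{\gamma-3}$ in the Type I ranges, and $\asymp m|q|\ell^{\gamma-1}k^{\gamma-3}$ in $k$ after shifting in $\ell$.

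One claim is off, though, and it is worth correcting because it misidentifies where the work is being done. You assert that in Type II ``cancellation must be extracted simultaneously from the diophantine approximation of $\alpha$ \dots and from the smooth oscillation,'' and that the quadratic coefficient $\alpha h(2q\ell+q^2)$ ``inherits a useful rational approximation.'' But your own third-derivative computation already shows this coefficient is irrelevant: $\alpha h(2q\ell+q^2)k^2$ is a quadratic polynomial in $k$ whose third derivative in $k$ vanishes, so only the $-m\bigl((\ell+q)^\gamma-\ell^\gamma\bigr)k^\gamma$ term feeds into Lemma \ref{Sargos}. Nothing in the proof of Lemma \ref{Sigmaest} uses \eqref{alphaaq}: the $\Sigma_2$ bound, the trivially small range of $N_1$, and all four $\Theta_i$ estimates are uniform in $\alpha$ and in $h$, and the parameters \eqref{Mgamma}, \eqref{vgamma} are calibrated solely to balance the Sargos exponents at $N^{\frac{15\gamma+13}{29}}$. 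The diophantine input enters only later, in Lemma \ref{Mainlemma}, via $\Omega$ (Lemma \ref{Ghosh}) and via $S(y)$ with the secondary approximants $a_h/q_h$ (Lemma \ref{Expsumest}). So the ``main obstacle'' you identify—interleaving a rational approximation with the smooth oscillation—is not the obstacle at all in this lemma; the paper's route entirely sidesteps the quadratic term inside the Weyl-differenced inner sum.
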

\begin{proof}
By \eqref{Mgamma}, \eqref{Sigma}, Lemma \ref{Expansion} and the simplest splitting up argument, we write
\begin{equation}\label{Sigmaest1}
\Sigma\ll\big(\Sigma_1+\Sigma_2\big)\log^2N+N^{1/2}\,,
\end{equation}
where
\begin{align}
\label{Sigma1}
&\Sigma_1=\sum\limits_{m\sim M_1}\frac{1}{m}\left|\sum\limits_{n\sim N_1}\Lambda(n)e(\alpha h n^2)\Big(e\big(-mn^\gamma\big)-e\big(-m(n+1)^\gamma\big)\Big) \right|\,,\\
\label{Sigma2}
&\Sigma_2=\sum\limits_{n\sim N_1}\min\left(1, \frac{1}{M\|n^\gamma\|}\right)\,,\\
\label{M1N1}
&M_1\leq \frac{M}{2}\,,\quad N_1\leq \frac{N}{2}\,.
\end{align}
Arguing as in (\cite{Tolev1}, Theorem 12.1.1), and using \eqref{Sigma2} and \eqref{M1N1}, we obtain
\begin{equation}\label{Sigma2est1}
\Sigma_2\ll \Big(NM^{-1}+N^\frac{\gamma}{2} M^\frac{1}{2}+N^{1-\frac{\gamma}{2}}M^{-\frac{1}{2}}\Big)\log M\,.
\end{equation}
Taking into account \eqref{Mgamma} and \eqref{Sigma2est1}, we get
\begin{equation}\label{Sigma2est2}
\Sigma_2\ll N^{\frac{15\gamma+13}{29}+\varepsilon}\,.
\end{equation}
Next we estimate $\Sigma_1$. Put
\begin{equation*}
\lambda(t)=1-e\big(m(t^\gamma-(t+1)^\gamma)\big)\,.
\end{equation*}
Applying Abel's summation formula, we derive
\begin{align}\label{Abelsum4}
&\sum\limits_{n\sim N_1}\Lambda(n)e(\alpha h n^2)\Big(e\big(-mn^\gamma\big)-e\big(-m(n+1)^\gamma\big)\Big)\nonumber\\
&=\lambda(2N_1)\sum\limits_{n\sim N_1}\Lambda(n)e\big(\alpha hn^2-mn^\gamma\big)-\int\limits_{N_1}^{2N_1}\left(\sum\limits_{N_1<n\leq t}\Lambda(n)e\big(\alpha hn^2-mn^\gamma\big)\right)\lambda'(t)\,dt\nonumber\\
&\ll m N^{\gamma-1}_1\max_{N_2\in[N_1,2N_1]}|\Phi(N_1,N_2)|\,,
\end{align}
where
\begin{equation}\label{Phi}
\Phi(N_1,N_2)=\sum\limits_{N_1<n\leq N_2}\Lambda(n)e\big(\alpha hn^2-mn^\gamma\big)\,.
\end{equation}
Now \eqref{Sigma1} and \eqref{Abelsum4} imply
\begin{equation}\label{Sigma1est1}
\Sigma_1\ll N^{\gamma-1}_1\sum\limits_{m\sim M_1}\max_{N_2\in[N_1,2N_1]}|\Phi(N_1,N_2)|\,.
\end{equation}
Suppose that
\begin{equation}\label{N1less}
N_1\leq N^{\frac{30\gamma-3}{29\gamma}}\,.
\end{equation}
Bearing in mind \eqref{Mgamma}, \eqref{M1N1}, \eqref{Phi}, \eqref{Sigma1est1}  and \eqref{N1less}, we deduce
\begin{equation}\label{Sigma1est2}
\Sigma_1\ll N^{\frac{15\gamma+13}{29}}\,.
\end{equation}
From now on we assume that
\begin{equation}\label{N1greater}
N^{\frac{30\gamma-3}{29\gamma}}<N_1\leq 2N\,.
\end{equation}
We shall estimate the sum \eqref{Phi}. Put
\begin{equation}\label{fdl}
f(d,l)=\alpha hd^2l^2-md^\gamma l^\gamma\,.
\end{equation}
Using \eqref{Phi}, \eqref{fdl} and Vaughan's identity (see \cite{Vaughan2}), we write
\begin{equation}\label{Thetadecomp}
\Phi(N_1,N_2)=\Theta_1-\Theta_2-\Theta_3-\Theta_4\,,
\end{equation}
where
\begin{align}
\label{Theta1}
&\Theta_1=\sum_{d\le \vartheta}\mu(d)\sum_{\frac{N_1}{d}<l\le \frac{N_2}{d}}e(f(d,l))\log l\,,\\
\label{Theta2}
&\Theta_2=\sum_{d\le \vartheta}c(d)\sum_{\frac{N_1}{d}<l\le \frac{N_2}{d}}e(f(d,l))\,,\\
\label{Theta3}
&\Theta_3=\sum_{\vartheta<d\le \vartheta^2}c(d)\sum_{\frac{N_1}{d}<l\le \frac{N_2}{d}}e(f(d,l))\,,\\
\label{Theta4}
&\Theta_4= \mathop{\sum\sum}_{\substack{N_1<dl\le N_2 \\d>\vartheta,\,l>\vartheta}}a(d)\Lambda(l) e(f(d,l))
\end{align}
and 
\begin{equation}\label{cdad}
|c(d)|\leq\log d,\quad  | a(d)|\leq\tau(d)\,,
\end{equation}
and $\vartheta$ is defined by \eqref{vgamma}. Consider first the sum $\Theta_2$ defined by \eqref{Theta2}. Taking into account \eqref{fdl}, we obtain
\begin{equation}\label{f''ll}
|f^{'''}_{lll}(d,l)|\asymp m d^3N_1^{\gamma-3}\,.
\end{equation}
Now \eqref{f''ll} and Lemma \ref{Sargos} yield
\begin{equation}\label{sumefdl}
\sum_{\frac{N_1}{d}<l\le \frac{N_2}{d}}e(f(d, l))\ll m^\frac{1}{6}d^{-\frac{1}{2}} N_1^{\frac{\gamma}{6}+\frac{1}{2}}+m^{-\frac{1}{3}}d^{-1}N_1^{1-\frac{\gamma}{3}}\,.
\end{equation}
From  \eqref{Mgamma}, \eqref{vgamma}, \eqref{Theta2}, \eqref{cdad} and \eqref{sumefdl}, we get
\begin{equation}\label{Theta2est1}
\Theta_2\ll\Big(m^\frac{1}{6}\vartheta^\frac{1}{2} N_1^{\frac{\gamma}{6}+\frac{1}{2}}+m^{-\frac{1}{3}}N_1^{1-\frac{\gamma}{3}}\Big)N^\varepsilon
\ll m^\frac{1}{6}\vartheta^\frac{1}{2} N_1^{\frac{\gamma}{6}+\frac{1}{2}}N^\varepsilon\,.
\end{equation}
In order to estimate $\Theta_1$ defined by \eqref{Theta1}, we apply Abel's summation formula. Then, applying the same method as for $\Theta_2$, we find
\begin{equation}\label{Theta1est1}
\Theta_1\ll m^\frac{1}{6}\vartheta^\frac{1}{2} N_1^{\frac{\gamma}{6}+\frac{1}{2}}N^\varepsilon\,.
\end{equation}
It remains to estimate the sums $\Theta_3$ and $\Theta_4$. By \eqref{Theta4}, we have 
\begin{equation}\label{Theta44'}
\Theta_4\ll|\Theta'_4|\log N_1\,,
\end{equation}
where
\begin{equation}\label{Theta'4}
\Theta'_4=\sum_{D<d\le 2D}a(d)\sum_{L<l\le 2L\atop{N_1<dl\le N_2}}\Lambda(l)e(f(d,l))
\end{equation}
and where
\begin{equation*}
\frac{N_1}{4}\leq DL\le 2N_1\,, \quad  \frac{\vartheta}{2}\leq D\leq  \frac{2N_1}{\vartheta}\,.
\end{equation*}
Arguing as in  \cite{Dimitrov} we conclude that it is sufficient to estimate the sum $\Theta'_4$ with the conditions
\begin{equation}\label{ParTheta'4}
\frac{N_1}{4}\leq DL\le 2N_1\,, \quad  \frac{N^\frac{1}{2}_1}{2}\leq D\leq \vartheta^2\,.
\end{equation}
Then the obtained estimate for $\Theta_4$ will be valid for $\Theta_3$.
Using \eqref{cdad}, \eqref{Theta'4}, \eqref{ParTheta'4}, Cauchy's inequality and Lemma \ref{Iwaniec-Kowalski} with $Q\leq \frac{L}{2}$, we derive
\begin{equation}\label{Theta'4est1}
|\Theta'_4|^2\ll\Bigg(\frac{LD}{Q}\sum_{1\leq q\leq Q}\sum_{L<l\le 2L}\bigg|\sum_{D_1<d\le D_2}e(g(d))\bigg|+\frac{ (LD)^2}{Q}\Bigg)N^\varepsilon\,,
\end{equation}
where
\begin{equation}\label{maxmin2}
D_1=\max{\bigg\{D,\frac{N_1}{l},\frac{N_1}{l+q}\bigg\}},\quad
D_2=\min{\bigg\{2D,\frac{N_2}{l},\frac{N_2}{l+q}\bigg\}}
\end{equation}
and
\begin{equation}\label{gd}
g(d)=f(d,l+q)-f(d,l)\,.
\end{equation}
Consider the function $g(d)$. From  \eqref{fdl} and \eqref{gd}, we deduce
\begin{equation}\label{g''d}
|g'''(d)|\asymp m D^{\gamma-3} q L^{\gamma-1}\,.
\end{equation}
Now \eqref{maxmin2}, \eqref{g''d} and Lemma \ref{Sargos} give us
\begin{equation}\label{sumegd}
\sum\limits_{D_1<d\leq D_2}e(g(d))
\ll m^\frac{1}{6}q^\frac{1}{6}D^{\frac{\gamma}{6}+\frac{1}{2}}L^{\frac{\gamma}{6}-\frac{1}{6}}+m^{-\frac{1}{3}}q^{-\frac{1}{3}}D^{1-\frac{\gamma}{3}}L^{\frac{1}{3}-\frac{\gamma}{3}}\,.
\end{equation}
We choose
\begin{equation}\label{Qmin}
Q=\min\Big([L/4]\,, [Q_0]\Big)\,,
\end{equation}
where
\begin{equation}\label{Q0mDL}
Q_0=m^{-\frac{1}{7}}D^\frac{3-\gamma}{7}L^\frac{1-\gamma}{7}\,.
\end{equation}
By \eqref{Mgamma}, \eqref{M1N1}, \eqref{N1greater}, \eqref{ParTheta'4} and \eqref{Q0mDL}, it follows that 
\begin{equation*}
Q_0>N^{\frac{332}{2639}}\,.
\end{equation*}
Taking into account \eqref{Theta'4est1}, \eqref{sumegd}, \eqref{Qmin} and \eqref{Q0mDL}, we obtain
\begin{align}\label{Theta'4est2}
|\Theta'_4|^2&\ll\big( D^2L^2Q^{-1}+m^\frac{1}{6}Q^\frac{1}{6}D^{\frac{\gamma}{6}+\frac{3}{2}}L^{\frac{\gamma}{6}+\frac{11}{6}}
+m^{-\frac{1}{3}}Q^{-\frac{1}{3}}D^{2-\frac{\gamma}{3}}L^{\frac{7}{3}-\frac{\gamma}{3}}\big)N^\varepsilon\nonumber\\
&\ll \Big(D^2L^2L^{-1}+ D^2L^2Q_0^{-1}+m^\frac{1}{6}Q_0^\frac{1}{6}D^{\frac{\gamma}{6}+\frac{3}{2}}L^{\frac{\gamma}{6}+\frac{11}{6}}\nonumber\\
&\hspace{47mm}+m^{-\frac{1}{3}}D^{2-\frac{\gamma}{3}}L^{\frac{7}{3}-\frac{\gamma}{3}}\big(L^{-\frac{1}{3}}+Q_0^{-\frac{1}{3}}\big)\Big)N^\varepsilon\nonumber\\
&\ll \Big( D^2L+m^\frac{1}{7}D^{\frac{\gamma}{7}+\frac{11}{7}}L^{\frac{\gamma}{7}+\frac{13}{7}}+m^{-\frac{1}{3}}D^{2-\frac{\gamma}{3}}L^{2-\frac{\gamma}{3}}
+m^{-\frac{2}{7}}D^{\frac{13}{7}-\frac{2\gamma}{7}}L^{\frac{16}{7}-\frac{2\gamma}{7}}\Big)N^\varepsilon\,.
\end{align}
Now \eqref{Theta44'}, \eqref{ParTheta'4} and \eqref{Theta'4est2} lead to
\begin{equation}\label{Theta4est1}
\Theta_4\ll \Big( N_1^\frac{1}{2}\vartheta+M^\frac{1}{14}N_1^{\frac{\gamma}{14}+\frac{6}{7}}\Big)N^\varepsilon\,.
\end{equation}
Working as in the estimation of $\Theta_4$ for the sum \eqref{Theta3}, we get
\begin{equation}\label{Theta3est1}
\Theta_3\ll \Big( N_1^\frac{1}{2}\vartheta+M^\frac{1}{14}N_1^{\frac{\gamma}{14}+\frac{6}{7}}\Big)N^\varepsilon\,.
\end{equation}
Summarizing \eqref{Thetadecomp}, \eqref{Theta2est1}, \eqref{Theta1est1}, \eqref{Theta4est1} and \eqref{Theta3est1}, we derive
\begin{equation}\label{Thetaest}
\Theta(N_1,N_2)\ll \Big( N_1^\frac{1}{2}\vartheta+M^\frac{1}{14}N_1^{\frac{\gamma}{14}+\frac{6}{7}}+m^\frac{1}{6}\vartheta^\frac{1}{2} N_1^{\frac{\gamma}{6}+\frac{1}{2}}\Big)N^\varepsilon\,.
\end{equation}
By  \eqref{Mgamma}, \eqref{vgamma}, \eqref{Sigma1est1}, \eqref{N1greater} and \eqref{Thetaest}, it follows that
\begin{equation}\label{Sigma1est3}
\Sigma_1\ll N^{\frac{15\gamma+13}{29}+\varepsilon}\,.
\end{equation}
Bearing in mind \eqref{Sigmaest1}, \eqref{Sigma2est2}, \eqref{Sigma1est2} and \eqref{Sigma1est3}, we establish the statement in the lemma.

\end{proof}

\begin{lemma}\label{Mainlemma}
Let $\frac{13}{14}<\gamma<1$. For the sum $\Gamma$ defined by \eqref{Gamma} the estimate
\begin{equation*}
\Gamma\ll N^{\frac{15\gamma+13}{29}+\varepsilon}
\end{equation*}
holds.
\end{lemma}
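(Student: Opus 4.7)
The plan is to approximate the sharp indicator $F_\Delta$ by trigonometric polynomials of degree $H=[q^{1/2}]$ (Vaughan's trick), apply the Piatetski-Shapiro identity to the prime condition, and then invoke Lemmas~\ref{Ghosh}, \ref{Expsumest} and \ref{Sigmaest} to handle the resulting pieces.

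First I construct trigonometric polynomials $V^{\pm}(\theta) = 2\Delta + \sum_{0<|h|\le H} c^{\pm}(h) e(h\theta)$ of degree $H$ satisfying $V^{-}(\theta) \leq F_\Delta(\theta) \leq V^{+}(\theta)$, with Fourier coefficients $|c^{\pm}(h)| \ll \min(\Delta, 1/|h|)$ and smoothing error bounded by $\min(1, 1/(H\|\theta-\Delta\|)) + \min(1, 1/(H\|\theta+\Delta\|))$. Substituting into \eqref{Gamma}, the smoothing error contributes a sum over $p$ which (after dropping the Piatetski-Shapiro condition and replacing $\log p$ by $\log N$) is bounded by Lemma~\ref{Ghosh}. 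A direct computation using $q = N^{(55-28\gamma)/29}$ and $H = [q^{1/2}]$ shows this contribution is $\ll N^{(15\gamma+13)/29+\varepsilon}$, and reduces the problem to bounding
\[
\sum_{0<|h|\le H} c(h) e(h\beta) S(h), \qquad S(h) = \sum_{p\le N,\, p=[n^{1/\gamma}]} e(h\alpha p^2) \log p.
\]

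Next I apply the identity $\mathbf{1}_{p=[n^{1/\gamma}]} = ((p+1)^\gamma - p^\gamma) + (\psi(-(p+1)^\gamma) - \psi(-p^\gamma))$, which splits $S(h) = S_1(h) + \Sigma(h)$, where $\Sigma(h)$ is precisely the sum defined in \eqref{Sigma}. Lemma~\ref{Sigmaest} gives $|\Sigma(h)| \ll N^{(15\gamma+13)/29+\varepsilon}$ uniformly in $h$, and since $\sum_{0<|h|\le H}|c(h)| \ll \log N$, the $\Sigma$-contribution is absorbed into the target bound. For $S_1(h)$, Abel summation with $(p+1)^\gamma - p^\gamma \asymp p^{\gamma-1}$ reduces it to $N^{\gamma-1} \max_{t\le N}|T(t;h)|$, where $T(t;h) = \sum_{p\le t} e(h\alpha p^2) \log p$. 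For $1 \leq h \leq H \leq q^{1/2}$, a Dirichlet-type argument combined with \eqref{alphaaq} ensures that the reduced denominator $q_h$ of $h\alpha$ satisfies $q_h \gg q^{1/2}$, so Lemma~\ref{Expsumest} applies and yields $T(t;h) \ll N^{1+\varepsilon}(q^{-1/8} + N^{-1/8})$. Multiplying by $N^{\gamma-1}$, weighting by $|c(h)| \ll 1/|h|$ and summing over $h$ gives the claimed bound.

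The main obstacle is the numerology: one must check that $N^{\gamma+\varepsilon}(q^{-1/8} + N^{-1/8})$ is dominated by $N^{(15\gamma+13)/29+\varepsilon}$ under the precise choice $N = q^{29/(55-28\gamma)}$. This delicate balance, together with the matching estimate for $\Sigma$ supplied by Lemma~\ref{Sigmaest}, essentially forces the exponent $29/(55-28\gamma)$ appearing in \eqref{Nq}.
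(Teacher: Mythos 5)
Your overall architecture matches the paper's: you smooth $F_\Delta$ by a degree-$H$ trigonometric approximation with $H=[q^{1/2}]$, pay the smoothing cost via Lemma~\ref{Ghosh}, split the prime-counting weight through the Piatetski--Shapiro identity into a Lipschitz part and a $\psi$-difference part, dispose of the latter with Lemma~\ref{Sigmaest}, and attack the former with Abel summation, a Dirichlet-type localization of $\alpha h$, and Lemma~\ref{Expsumest}. (The paper performs the PS split before the Fourier expansion and uses the two-sided error expansion \eqref{fDeltaexpansion} rather than Selberg majorants/minorants, but these are cosmetic differences; likewise the paper only records $q_h>q^{1/3}$ whereas you claim $q_h\gg q^{1/2}$ --- both suffice.)

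However, there is a genuine error in the key estimate. From Lemma~\ref{Expsumest} applied with $q_h$ in place of $q$ one obtains
\begin{equation*}
T(t;h)\ \ll\ N^{1+\varepsilon}\Big(q_h^{-\frac14}+N^{-\frac18}+q_h^{\frac14}N^{-\frac12}\Big),
\end{equation*}
and you have dropped the third term $q_h^{1/4}N^{-1/2}$. That term cannot be discarded: since the Dirichlet localization only gives $q_h\le q^2$, it contributes $N^{1/2}q^{1/2}$, which after multiplication by $N^{\gamma-1}$ becomes $N^{\gamma-1/2}q^{1/2}$. Under the choice \eqref{Nq}, $N=q^{29/(55-28\gamma)}$, one has
\begin{equation*}
N^{\gamma-\frac12}q^{\frac12}\ =\ N^{\gamma-\frac12+\frac{55-28\gamma}{58}}\ =\ N^{\frac{30\gamma+26}{58}}\ =\ N^{\frac{15\gamma+13}{29}},
\end{equation*}
i.e.\ it equals the target bound \emph{exactly}. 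Thus this is precisely the term that saturates the estimate and, together with Lemma~\ref{Sigmaest}, forces the exponent $29/(55-28\gamma)$. By contrast, the two terms you retained, $N^\gamma q^{-1/8}$ and $N^{\gamma-1/8}$, are both strictly smaller than $N^{(15\gamma+13)/29}$ throughout $\frac{13}{14}<\gamma<1$ and impose no constraint at all. So your closing paragraph misidentifies the ``delicate balance'': as written, your $S_1$-bound does not follow from Lemma~\ref{Expsumest}, and the actual numerological bottleneck is the $q_h^{1/4}N^{-1/2}$ term you omitted. Once that term is reinstated and traced through (as in the paper's passage from \eqref{mathfrakSest1} to \eqref{mathfrakSest2}), the argument goes through.
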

\begin{proof}

From \eqref{Gamma}, we have
\begin{equation}\label{Gammadecomp}
\Gamma=\sum\limits_{p\leq N}\big([-p^\gamma]-[-(p+1)^\gamma]\big)\big(F_\Delta(\alpha p^2+\beta)-2\Delta\big)\log p
=\Gamma_1+\Gamma_2\,,
\end{equation}
where
\begin{align}
\label{Gamma1}
&\Gamma_1=\sum\limits_{p\leq N}\big((p+1)^\gamma-p^\gamma\big)\big(F_\Delta(\alpha p^2+\beta)-2\Delta\big)\log p\,,\\
\label{Gamma2}
&\Gamma_2=\sum\limits_{p\leq N}\big(\psi(-(p+1)^\gamma)-\psi(-p^\gamma)\big)\big(F_\Delta(\alpha p^2+\beta)-2\Delta\big)\log p\,.
\end{align}

\textbf{Upper bound for} $\mathbf{\Gamma_1}$
\indent

The function $F_\Delta(\theta)-2\Delta$ is well known to have the expansion
\begin{equation}\label{fDeltaexpansion}
\sum\limits_{1\leq|h|\leq H}\frac{\sin2\pi h\Delta}{\pi h}\,e(h\theta)+\mathcal{O}\Bigg(\min\left(1, \frac{1}{H\|\theta+\Delta\|}\right)+\min\left(1, \frac{1}{H\|\theta-\Delta\|}\right)\Bigg)\,.
\end{equation}
We also have
\begin{equation}\label{p+1}
(p+1)^\gamma-p^\gamma=\gamma p^{\gamma-1}+\mathcal{O}\left(p^{\gamma-2}\right)\,.
\end{equation}
Now \eqref{Gamma1}, \eqref{fDeltaexpansion} and \eqref{p+1}, give us
\begin{equation}\label{Gamma1est1}
\Gamma_1=\gamma\sum\limits_{p\leq N}p^{\gamma-1}\log p
\sum\limits_{1\leq|h|\leq H}\frac{\sin2\pi h\Delta}{\pi h}\,e\big(h(\alpha p^2+\beta)\big)+\mathcal{O}\big(\Omega\log N\big)\,,
\end{equation}
where
\begin{equation}\label{Omega}
\Omega=\sum\limits_{n=1}^N\Bigg(\min\left(1, \frac{1}{H\|\alpha n^2+\beta+\Delta\|}\right)+\min\left(1, \frac{1}{H\|\alpha n^2+\beta-\Delta\|}\right)\Bigg)\,.
\end{equation}
From \eqref{alphaaq}, \eqref{Nq}, \eqref{HNq}, \eqref{Omega} and Lemma \ref{Summin2}, we obtain
\begin{equation}\label{Omegaest}
\Omega\ll N^\varepsilon\Big(Nq^{-\frac{1}{2}}+N^\frac{1}{2}+NH^{-1}+H^{-\frac{1}{2}}q^\frac{1}{2}\Big)\ll N^{1+\varepsilon} q^{-\frac{1}{2}}\ll N^{\frac{28\gamma+3}{58}+\varepsilon}\,.
\end{equation}
Now \eqref{Gamma1est1} and \eqref{Omegaest} imply
\begin{equation}\label{Gamma1est2}
\Gamma_1\ll\sum\limits_{h=1}^{H}\min\left(\Delta,\frac{1}{h}\right)\left|\sum\limits_{p\leq N}p^{\gamma-1}e(\alpha h p^2)\log p \right|+N^{\frac{28\gamma+3}{58}+\varepsilon}\,.
\end{equation}
Put
\begin{equation}\label{mathfrakS}
\mathfrak{S}(u)=\sum\limits_{h\leq u}\left|\sum\limits_{p\leq N}p^{\gamma-1}e(\alpha h p^2)\log p \right|\,.
\end{equation}
Using Abel's summation formula, we get
\begin{align}\label{Abelsum1}
\sum\limits_{h=1}^{H}\min\left(\Delta,\frac{1}{h}\right)\left|\sum\limits_{p\leq N}p^{\gamma-1}e(\alpha h p^2)\log p \right|
&=\frac{\mathfrak{S}(H)}{H}+\int\limits_{\Delta^{-1}}^{H}\frac{\mathfrak{S}(u)}{u^2}\,du\nonumber\\
&\ll(\log H)\max_{\Delta^{-1}\leq u\leq H}\frac{\mathfrak{S}(u)}{u}\,.
\end{align}
Applying Abel's summation formula again, we deduce
\begin{equation}\label{Abelsum2}
\sum\limits_{p\leq N}p^{\gamma-1}e(\alpha h p^2)\log p=N^{\gamma-1}S(N)+(1-\gamma)\int\limits_{2}^{N}S(y)y^{\gamma-2}\,dy\,,
\end{equation}
where
\begin{equation}\label{Sy}
S(y)=\sum\limits_{p\leq y}e(\alpha h p^2)\log p\,.
\end{equation}
From Dirichlet's approximation theorem it follows the existence of integers $a_h$ and $q_h$ such that
\begin{equation}\label{alphahahqh}
\bigg|\alpha h-\frac{a_h}{q_h}\bigg|\leq\frac{1}{q_hq^2}\,,
\quad (a_h,q_h)=1\,,\quad 1\leq q_h\leq q^2\,.
\end{equation}
Taking into account \eqref{Sy}, \eqref{alphahahqh} and Lemma \ref{Expsumest}, we derive
\begin{equation}\label{Syest}
S(y)\ll y^{1+\varepsilon}\Big(q_h^{-\frac{1}{4}}+y^{-\frac{1}{8}}+y^{-\frac{1}{2}}q_h^{\frac{1}{4}}\Big)\,.
\end{equation}
By \eqref{mathfrakS}, \eqref{Abelsum2} and \eqref{Syest}, we obtain
\begin{equation}\label{mathfrakSest1}
\mathfrak{S}(u)\ll N^{\gamma-1+\varepsilon}\sum\limits_{h\leq u}\Big(Nq_h^{-\frac{1}{4}}+N^{\frac{7}{8}}+N^{\frac{1}{2}}q_h^{\frac{1}{4}}\Big)\,.
\end{equation}
Using \eqref{alphaaq}, \eqref{HNq}, \eqref{alphahahqh} and arguing as in \cite{Dimitrov}, we conclude that
\begin{equation}\label{qhlimits}
q_h\in\Big(q^\frac{1}{3},q^2\Big]\,.
\end{equation}
Bearing in mind \eqref{Nq}, \eqref{mathfrakSest1} and \eqref{qhlimits}, we get
\begin{equation}\label{mathfrakSest2}
\mathfrak{S}(u)\ll uN^{\gamma-\frac{1}{2}+\varepsilon}q^\frac{1}{2}\ll uN^{\frac{15\gamma+13}{29}+\varepsilon}\,.
\end{equation}
Summarizing \eqref{Nq}, \eqref{HNq}, \eqref{Gamma1est2}, \eqref{Abelsum1} and \eqref{mathfrakSest2}, we obtain 
\begin{equation}\label{Gamma1est3}
\Gamma_1\ll N^{\frac{15\gamma+13}{29}+\varepsilon}\,.
\end{equation}

\textbf{Upper bound for} $\mathbf{\Gamma_2}$
\indent

Using \eqref{Gamma2}, \eqref{fDeltaexpansion} and arguing as in $\Gamma_1$, we deduce
\begin{equation}\label{Gamma2est1}
\Gamma_2\ll\sum\limits_{h=1}^{H}\min\left(\Delta,\frac{1}{h}\right)
\left|\sum\limits_{p\leq N}\big(\psi(-(p+1)^\gamma)-\psi(-p^\gamma)\big)e(\alpha h p^2)\log p \right|+N^{\frac{28\gamma+3}{58}+\varepsilon}\,.
\end{equation}
Denote
\begin{equation}\label{Gu}
G(u)=\sum\limits_{h\leq u}\left|\sum\limits_{p\leq N}\big(\psi(-(p+1)^\gamma)-\psi(-p^\gamma)\big)e(\alpha h p^2)\log p \right|\,.
\end{equation}
By \eqref{Gu} and Abel's summation formula, we get
\begin{align}\label{Abelsum3}
\sum\limits_{h=1}^{H}\min\left(\Delta,\frac{1}{h}\right)
&\left|\sum\limits_{p\leq N}\big(\psi(-(p+1)^\gamma)-\psi(-p^\gamma)\big)e(\alpha h p^2)\log p \right|\nonumber\\
&=\frac{G(H)}{H}+\int\limits_{\Delta^{-1}}^{H}\frac{G(u)}{u^2}\,du\,.
\end{align}
Now \eqref{Nq}, \eqref{HNq}, \eqref{Gamma2est1} -- \eqref{Abelsum3} and Lemma \ref{Sigmaest} lead to
\begin{equation}\label{Gamma2est2}
\Gamma_2\ll(\log H)\max_{\Delta^{-1}\leq u\leq H}\frac{G(u)}{u}+N^{\frac{28\gamma+3}{58}+\varepsilon}\ll N^{\frac{15\gamma+13}{29}+\varepsilon}\,.
\end{equation}
From \eqref{Gammadecomp}, \eqref{Gamma1est3} and \eqref{Gamma2est2}, it follows the statement in the lemma.
\end{proof}

\subsection{The end of the proof}
\indent

Taking into account \eqref{Louerbound}, \eqref{Delta}, \eqref{Gamma} and Lemma \ref{Mainlemma}, we  establish
\begin{equation*}
\sum\limits_{p\leq N\atop{p=[n^{1/\gamma}]}}F_\Delta(\alpha p^2+\beta)\log p\gg N^{\frac{15\gamma+13}{29}+\varepsilon}\,.
\end{equation*}

This completes the proof of Theorem \ref{Theorem}.

\vskip20pt
\footnotesize
\begin{flushleft}
S. I. Dimitrov\\
\quad\\
Faculty of Applied Mathematics and Informatics\\
Technical University of Sofia \\
Blvd. St.Kliment Ohridski 8 \\
Sofia 1756, Bulgaria\\
e-mail: sdimitrov@tu-sofia.bg\\
\end{flushleft}

\begin{flushleft}
Department of Bioinformatics and Mathematical Modelling\\
Institute of Biophysics and Biomedical Engineering\\
Bulgarian Academy of Sciences\\
Acad. G. Bonchev Str. Bl. 105, Sofia 1113, Bulgaria \\
e-mail: xyzstoyan@gmail.com\\
\end{flushleft}

\hskip10pt

\begin{flushleft}
M. D. Lazarova\\
\quad\\
Faculty of Applied Mathematics and Informatics\\
Technical University of Sofia \\
Blvd. St.Kliment Ohridski 8 \\
Sofia 1756, Bulgaria\\
e-mail: meglena.laz@tu-sofia.bg\\
\end{flushleft}

\end{document}